\theoremstyle{plain}
\newtheorem{theorem}{Theorem}[section]
\theoremstyle{definition}
\newtheorem{definition}{Definition}[section]
\theoremstyle{definition}
\newcommand{\impl}{\supset}
\newcommand{\et}{\wedge}
\newcommand{\vel}{\vee}
\newcommand{\fal}{\bot}
\newcommand{\non}{\mathord{\sim}}
\newcommand{\ergo}{\Rightarrow}
\newcommand{\calcg}{\mathbf{G}}
\newcommand{\amal}{\mathrm{Am}}
\newcommand{\lang}{\mathcal{L}}
\newcommand{\langg}{\lang_\gr}
\newcommand{\condr}{\triangleright}
\newcommand{\gr}{<}
\newcommand{\set}{\mathrm{\{\,\}}}
\title{A Direct Characterisation of Logical Grounds and a Decidability Proof}
\author{Francesco A. Genco\\LLC, University of Turin, Turin, Italy}
\begin{document}
\maketitle

\section{Introduction}

We present a standard calculus for logical grounding based on well-established grounding principles \cite{Schnieder2011,fin12guide,cor14,Correia2024} and provide a very direct characterisation of the provable grounding claims exclusively relying on the syntactic tree of the grounded formula. The technical features of the characterisation imply that the grounding relation induced by the calculus is decidable.

\section{The grounding calculus}

We define the grounding language that we will use.
\begin{definition}
The language $\lang$ is defined as follows:
\begin{itemize}
\item if $\alpha$ is one of the sentence letters $p,q,r,\dots$ or $\bot$, then $\alpha\in\lang$;
\item if $\alpha,\beta\in\lang$, then $\alpha\et \beta\in\lang$ and $\alpha\impl \beta\in\lang$;
\item if $\alpha_1, \dots , \alpha_n,\beta\in\lang$, then $\{\alpha_1, \dots , \alpha_n\}<\beta\in\langg$.
%\item if $\alpha,\beta\in\langg$, $\alpha\et\beta\in\langg$ and $\alpha\impl \beta\in\langg$.
\end{itemize}
\end{definition}

We use Greek lowercase letters $\alpha, \beta,\gamma,\dots$ as metavariables for formulae and, for any formula $\alpha$, 
%. Finite sets of formulas are denoted by capital Greek letters $\Gamma, \Delta\dots $. Given any formula $\alpha$ and any finite set of formulas $\Gamma$, the notation $\Gamma,\alpha$, or $\alpha ,\Gamma$, refers to the set obtained by adding $\alpha$ to $\Gamma$. Moreover, for any formula $\alpha$, 
we use the abbreviation $\non\alpha$ to denote the formula $\varphi \impl\fal$.

%\begin{definition}
%For any formula $\alpha $ and any binary connective $\star$, the \emph{depth} of $\alpha$, that is, $\vert\alpha\vert$ is inductively defined as follows:
%\begin{itemize}
%\item $\vert\fal\vert= \vert p\vert=0$
%\item $
%\vert\beta\star\gamma\vert=\max(\vert\beta\vert, \vert\gamma\vert)+1
%$
%\end{itemize}\end{definition}

\begin{figure}[ht]
\hrule
\medskip

\[
\infer{  \ergo \{\alpha,\beta\}<\alpha\et\beta}{}
\qquad 
\infer{  \ergo\{\alpha\}<\alpha\vel\beta}{}
\qquad 
\infer{  \ergo\{\beta\}<\alpha\vel\beta}{}
\qquad 
\infer[\ast]{  \ergo\{\alpha,\beta\}<\alpha\vel\beta}{}
\]\[ 
\infer{  \ergo\{\non\alpha\}<\non(\alpha\et\beta)}{}
\qquad 
\infer{  \ergo\{\non\beta\}<\non(\alpha\et\beta)}{}
\qquad
\infer[\ast]{  \ergo\{\non\alpha,\non\beta\}<\non(\alpha\et\beta)}{}
\]\[ 
\infer{  \ergo\{\non\alpha,\non\beta\}<\non(\alpha\vel\beta)}{}
\qquad 
\infer{  \ergo\{\alpha\}<\non\non\alpha}{}
\]

\medskip
\hrule
\caption{$0$-premiss rules.}
\label{fig:0-rules-gr}
\end{figure}

\begin{figure}[ht]
\hrule
\medskip

%\[
%\infer{  \ergo \{\alpha,\beta\}<\alpha\et\beta}{}
%\qquad 
%\infer{  \ergo\{\alpha\}<\alpha\vel\beta}{}
%\qquad 
%\infer{  \ergo\{\beta\}<\alpha\vel\beta}{}
%\qquad 
%\infer[\ast]{  \ergo\{\alpha,\beta\}<\alpha\vel\beta}{}
%\]\[ 
%\infer{  \ergo\{\non\alpha\}<\non(\alpha\et\beta)}{}
%\qquad 
%\infer{  \ergo\{\non\beta\}<\non(\alpha\et\beta)}{}
%\qquad
%\infer[\ast]{  \ergo\{\non\alpha,\non\beta\}<\non(\alpha\et\beta)}{}
%\]\[ 
%\infer{  \ergo\{\non\alpha,\non\beta\}<\non(\alpha\vel\beta)}{}
%\qquad 
%\infer{  \ergo\{\alpha\}<\non\non\alpha}{}
%\]

\[
\infer{    \ergo\{\alpha_1,\dots,\alpha_n,\beta_1,\dots,\beta_m\}<\alpha\et\beta}{  \ergo\{\alpha_1,\dots,\alpha_n\}<\alpha&\ergo\{\beta_1,\dots,\beta_m\}<\beta}
\]
%\[
%\infer{  \ergo\{\alpha_1,\dots,\beta,\gamma,\dots, \alpha_n\}<\delta}{  \ergo\{\alpha_1,\dots,\beta\et\gamma,\dots, \alpha_n\}<\delta}
%\]
\[ 
\infer{  \ergo\{\alpha_1,\dots,\alpha_n\}<\alpha\vel\beta}{  \ergo\{\alpha_1,\dots,\alpha_n\}<\alpha}
\qquad
\infer{  \ergo\{\beta_1,\dots,\beta_n\}<\alpha\vel\beta}{  \ergo\{\beta_1,\dots,\beta_n\}<\beta}
\]
\[
\infer[\ast]{    \ergo\{\alpha_1,\dots,\alpha_n,\beta_1,\dots,\beta_m\}<\alpha\vel\beta}{  \ergo\{\alpha_1,\dots,\alpha_n\}<\alpha&\ergo\{\beta_1,\dots,\beta_m\}<\beta}
\]
%\[ 
%\infer{  \ergo\{\alpha_1,\dots,\beta,\dots, \alpha_n\}<\delta}{  \ergo\{\alpha_1,\dots,\beta\vel\gamma,\dots, \alpha_n\}<\delta}
%\qquad
%\infer{  \ergo\{\alpha_1,\dots,\gamma,\dots, \alpha_n\}<\delta}{  \ergo\{\alpha_1,\dots,\beta\vel\gamma,\dots, \alpha_n\}<\delta}
%\]
%\[\infer[\ast]{  \ergo\{\alpha_1,\dots,\beta,\gamma, \dots, \alpha_n\}<\delta}{  \ergo\{\alpha_1,\dots,\beta\vel\gamma,\dots, \alpha_n\}<\delta}
%\]
\[\infer{  \ergo\{\alpha_1,\dots,\alpha_n\}<\non(\alpha\et\beta)}{  \ergo\{\alpha_1,\dots,\alpha_n\}<\non\alpha}
\qquad
\infer{  \ergo\{\beta_1,\dots,\beta_n\}<\non(\alpha\et\beta)}{  \ergo\{\beta_1,\dots,\beta_n\}<\non\beta}
\]
\[\infer[\ast]{\ergo\{\alpha_1,\dots,\alpha_n,\beta_1,\dots,\beta_m\}<\non(\alpha\et\beta)}{  \ergo\{\alpha_1,\dots,\alpha_n\}<\non\alpha&\Delta\ergo\{\beta_1,\dots,\beta_m\}<\non\beta}
\]
%\[ 
%\infer{  \ergo\{\alpha_1,\dots,\non\beta,\dots, \alpha_n\}<\delta}{  \ergo\{\alpha_1,\dots,\non(\beta\et \gamma),\dots, \alpha_n\}<\delta}
%\qquad
%\infer{\ergo\{\alpha_1,\dots,\non\gamma,\dots, \alpha_n\}<\delta}{  \ergo\{\alpha_1,\dots,\non(\beta\et \gamma),\dots, \alpha_n\}<\delta}
%\]
%\[\infer[\ast]{\ergo\{\alpha_1,\dots,\non\alpha,\non\beta,\dots, \alpha_n\}<\delta}{  \ergo\{\alpha_1,\dots,\non(\beta\et \gamma),\dots, \alpha_n\}<\delta}\]
\[
\infer{    \ergo\{\alpha_1,\dots,\alpha_n,\beta_1,\dots,\beta_m\}<\non(\alpha\vel\beta)}{  \ergo\{\alpha_1,\dots,\alpha_n\}<\non\alpha&\Delta\ergo\{\beta_1,\dots,\beta_m\}<\non\beta}
\]
%\[\infer{  \ergo\{\alpha_1,\dots,\non\beta,\non\gamma,\dots, \alpha_n\}<\delta}{  \ergo\{\alpha_1,\dots,\non(\beta\vel \gamma),\dots, \alpha_n\}<\delta}
%\]
\[ 
\infer{  \ergo\{\alpha_1,\dots, \alpha_n\}<\non\non\alpha}{  \ergo\{\alpha_1,\dots, \alpha_n\}<\alpha}
%\qquad
%\infer{  \ergo\{\alpha_1,\dots,\beta,\dots, \alpha_n\}<\gamma}{  \ergo\{\alpha_1,\dots,\non\non\beta,\dots, \alpha_n\}<\gamma}
\]

\medskip
\hrule
\caption{Introduction rules.}
\label{fig:intro-rules-gr}
\end{figure}

\begin{figure}[ht]
\hrule
\medskip

\[
\infer{  \ergo\{\alpha_1,\dots,\beta,\gamma,\dots, \alpha_n\}<\delta}{  \ergo\{\alpha_1,\dots,\beta\et\gamma,\dots, \alpha_n\}<\delta}
\]
\[ 
\infer{  \ergo\{\alpha_1,\dots,\beta,\dots, \alpha_n\}<\delta}{  \ergo\{\alpha_1,\dots,\beta\vel\gamma,\dots, \alpha_n\}<\delta}
\qquad
\infer{  \ergo\{\alpha_1,\dots,\gamma,\dots, \alpha_n\}<\delta}{  \ergo\{\alpha_1,\dots,\beta\vel\gamma,\dots, \alpha_n\}<\delta}
\]
\[\infer[\ast]{  \ergo\{\alpha_1,\dots,\beta,\gamma, \dots, \alpha_n\}<\delta}{  \ergo\{\alpha_1,\dots,\beta\vel\gamma,\dots, \alpha_n\}<\delta}
\]
\[ 
\infer{  \ergo\{\alpha_1,\dots,\non\beta,\dots, \alpha_n\}<\delta}{  \ergo\{\alpha_1,\dots,\non(\beta\et \gamma),\dots, \alpha_n\}<\delta}
\qquad
\infer{  \ergo\{\alpha_1,\dots,\non\gamma,\dots, \alpha_n\}<\delta}{  \ergo\{\alpha_1,\dots,\non(\beta\et \gamma),\dots, \alpha_n\}<\delta}
\]
\[\infer[\ast]{\ergo\{\alpha_1,\dots,\non\alpha,\non\beta,\dots, \alpha_n\}<\delta}{  \ergo\{\alpha_1,\dots,\non(\beta\et \gamma),\dots, \alpha_n\}<\delta}\]
\[\infer{\ergo\{\alpha_1,\dots,\non\beta,\non\gamma,\dots, \alpha_n\}<\delta}{  \ergo\{\alpha_1,\dots,\non(\beta\vel \gamma),\dots, \alpha_n\}<\delta}
\]
\[ 
\infer{  \ergo\{\alpha_1,\dots,\beta,\dots, \alpha_n\}<\gamma}{  \ergo\{\alpha_1,\dots,\non\non\beta,\dots, \alpha_n\}<\gamma}
\]

\medskip
\hrule
\caption{Elimination rules.}
\label{fig:elim-rules-gr}
\end{figure}

The system $\calcg$ is defined by the rules in Figures \ref{fig:0-rules-gr},  \ref{fig:intro-rules-gr} ,  \ref{fig:elim-rules-gr}  and \ref{fig:structural-rules-gr}. The first four $0$-premiss rules are characterized by the fact that the formula to the left of $<$ contains the immediate subformulas of the formula to the right of $<$. Quite simply, they say that $\{\alpha, \beta\}$ grounds the conjunction $\alpha\et \beta$ and any subset of $\{\alpha, \beta\}$ grounds the disjunction $\alpha\vel \beta$. We mark by $\ast$ the rules enabling us to ground a disjunction by both disjuncts because we will treat separately the calculus containing them and the one not containing them. The rest of the $0$-premiss rules are justified by similar principles but also involve negation. A single negation, nevertheless, cannot be handled just as an occurrence of any other connective. Indeed, rather obviously, there is no way to ground the truth of a formula $\non \alpha$ in the truth of its immediate subformula $\alpha$. Therefore, the rules for ground need to be defined in such a way that they handle an occurrence of negation in combination with an occurrence of some other connective. In order to ground the truth of a negated conjunction $\non(\alpha\et\beta)$, it is enough to use either $\non\alpha$ or $\non\beta$; in order to ground the truth of a negated disjunction $\non(\alpha\vel\beta)$, we must use both elements of the set $\{\non\alpha ,\non\beta\}$. The interaction of negation with itself is rather simple: the ground of $\non\non\alpha$ is simply $\alpha$. The latter, indeed, is the largest subformula of $\non\non\alpha$ that can justify $\non\non\alpha$. The rules in Figure \ref{fig:intro-rules-gr} enable us to introduce an occurrence of a connective in the formula to the right of $<$. The rules in Figure \ref{fig:elim-rules-gr} enable us to eliminate an occurrence of a connective in a formula to the left of $<$.

\begin{figure}[ht]
\hrule
\medskip

{\small 
\[ 
\vcenter{\infer[\set c]{  \ergo\{\beta_1,\dots,\beta_m\}<\alpha}{\ergo\{\alpha_1,\dots,\alpha_n\}<\alpha}}
\]}
{\small \[ 
\infer[\amal]{  \ergo\{\alpha_1,\dots, \alpha_n,\beta_1,\dots, \beta_m\}<\alpha}{ \ergo\{\alpha_1,\dots, \alpha_n\}<\alpha& \ergo\{\beta_1,\dots, \beta_m\}<\alpha}
\]
%\[ 
%\infer[\cut]{ \ergo
%\{\beta_1,\dots,\beta_{i-1},\alpha_1,\dots,\alpha_n,\beta_{i+1},\dots, \beta_m\}<\beta}{\ergo\{\alpha_1,\dots,\alpha_n\}<\alpha& \ergo\{\beta_1,\dots,\beta_{i-1},\alpha,\beta_{i+1},\dots, \beta_m\}<\beta}
%\]

}

\medskip
\hrule
\caption{Structural rules}
\label{fig:structural-rules-gr}
\end{figure}

The rules in Figure \ref{fig:structural-rules-gr} are basic structural rules. $\set c$ enables us to handle sets in such a way that the order and number of occurrences of their elements do not matter: here the list $\beta_1,\dots,\beta_m$ contains the same elements as $\alpha_1,\dots,\alpha_n$ but without repetitions and, possibly, in a different order. $\amal$---for {\it Amalgamation}---enables us to merge possibly different grounds $\{\alpha_1,\dots, \alpha_n\}$ and $\{\beta_1 , \dots , \beta_m\}$ of the same formula $\alpha$ in case they are indeed grounds of $\alpha$. 

%$\cut$, on the other hand, enables us to replace a formula $\alpha$---the {\it cut formula}---by one of its grounds---that is, $\alpha_1,\dots,\alpha_n$---in case $\alpha$ occurs inside a ground $\{\beta_1,\dots,\beta_{i-1},\alpha,\beta_{i+1},\dots, \beta_m\}$ of a formula $\beta$. This rule, rather obviously, looks very similar to the infamous rule of cut often employed in sequent systems.\footnote{See \cite{gen35}.}
%%{\small \[\infer{\Gamma,\Delta\ergo\beta}{\Gamma\ergo\alpha & \alpha,\Delta\ergo\beta}\]}

%Even though the validity of Amalgamation 
%%and Cut 
%is a controversial issue in the literature on ground, we include them here.\footnote{On the amalgamation rule, see, for instance, \cite{fin12guide,fin12pure,cor14}. On the cut rule and, more generally, transitivity, see, for instance, \cite{sch12,rav13,Litland2013,gen24operators}.} It is important to note, though, that the results presented below would hold {\it a fortiori} if one or both of these rules were to be removed from the calculus.

Derivations in $\calcg$ are defined as usual: by applying a rule to derivations of its premisses, one obtains a derivation of the conclusion.

Even though the rules of $\calcg$ do not use contexts to express the dependencies of the derived expressions, we keep the notation $\ergo \alpha$ in order to stress that, technically, we handle $<$ as a connective of the language and not as a relation on the language.\footnote{Nevertheless, $<$ exactly captures the derivability relation induced by calculi such as---if we exclude the $\amal$ rule---the one in \cite{cor14}. Indeed, the conclusions of the $0$-premiss rules of $\calcg$ exactly  capture individual applications of the rules in Correia's calculus, introduction rules in $\calcg$ have the same effect as Correia's rules, and elimination rules in $\calcg$ have the same effect as applying these rules to derive a hypothesis of an existing derivation.}

\section{Direct characterisation of grounds and decidability of the grounding relation} 
 We show that it is possible to characterise the set of all formulae $\{\alpha_1, \dots,\alpha_n \}<\alpha$ which are provable in $\calcg$ by a simple procedure on the syntactic tree of $\alpha$. This implies, in particular, that it is possible determine when a derivable statement $\{\alpha_1, \dots,\alpha_n \}\condr\alpha$ corresponds to a provable grounding claim.

Let us begin by fixing some terminological conventions and by defining some useful notions. In the following, we assume that the root of a tree is on top of it, and hence that a node $a$ is below a node $b$ if, and only if, the path from $a$ to the root of the tree goes through $b$.
 Moreover, for any logical operator $\star$, we call $\star$-nodes all nodes of the syntactic tree of the formula $\alpha$ that correspond to a subformula of $\alpha$ with $\star$ as  main connective.

\begin{definition}[Positive and negative nodes of a syntactic tree]
Given any syntactic tree, we call {\it positive nodes} those that occur strictly below an even number of $\non$-nodes and {\it negative nodes} those that occur strictly below an odd number of $\non$-nodes.
\end{definition}

\begin{definition}[Feeble nodes of a syntactic tree]
Given any syntactic tree $T$, we call {\it feeble nodes} the positive $\vel$-nodes and the negative $\et$-nodes.
\end{definition}

%A log of $T_\varphi$ is any element of the forest obtained by deleting all feeble edges of $T_\varphi$.

%Given any syntactic tree $T_\varphi$, a *vein* of $T_\varphi$ is a set $P$ of paths of $T_\varphi$ such that
%- each $p\in P$ goes from a different leaf of $T_\varphi$ to its root, and
%- if a $p\in P$ goes through and element of a log $L$ of $T_\varphi$, than, for each node $l$ of $L$, there exists a path $p'\in P$ that goes through $l$.

%\begin{definition}
%Given any vein $V$ of a syntactic tree $T_\varphi$, the subtree obtained by deleting all nodes and edges through which no element of $V$ goes is a *nurtured subtree* of $T_\varphi$.
%\end{definition}
%
%In other terms, a nurturing subtree of $T_\varphi$ is one obtained by removing any number of the branches of $T_\varphi$ which are rooted at feeble nodes.

%A *logical decomposition* of $\varphi$ is any *negativisation* of a non-$\non$ bar of any nurtured subtree of $T_\varphi$.
%
%A grounding claim $\{\overline{\gamma}\}<\varphi$ is provable if, and only if, the set of formulae $\{\overline{\gamma}\}$ is a *logical decomposition* of $\varphi$.

\begin{definition}[Selection tree]
Given a formula $\alpha$ with syntactic tree $T$, a {\it selection tree} for $\alpha$ is any tree obtained by selecting {\it at most one} child of each feeble node of $T$ ({\it exactly one} child in case the rules for disjunction marked with $\ast$ are not in the calculus), and then by deleting from $T$ all subtrees rooted at any of the selected nodes.
\end{definition}

We remind the reader that a {\it bar} of a tree $T$ is a set $B$ of nodes of $T$ such that exactly one element of $B$ occurs on each path from a leaf of $T$ to its root. We call a bar {\it non trivial} if it does not contain the root of $T$.

\begin{definition}[Grounding bar]
Given any formula $\alpha$, a {\it grounding bar} $G$ for $\alpha$ is obtained as follows: pick a selection tree $S$ for $\alpha$; pick a bar $B$ of $S$ that is non trivial and that does not contain the only child of the root of $S$, if there is one such node; prefix all negative nodes in $B$ by $\non$.
\end{definition}

\begin{theorem}
A grounding claim $\{\alpha_1, \dots , \alpha_n\}<\alpha $ is provable in $\calcg$ if, and only if, the set of formulae $\{\alpha_1, \dots , \alpha_n\}$ is the union of grounding bars for $\alpha$. 
\end{theorem}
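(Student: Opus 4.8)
The plan is to prove the two inclusions separately, in each case exploiting a tight correspondence between the three operations one can perform on grounding bars---reading off the shallowest bar, pushing a bar one step downwards, and taking unions---and three kinds of rule of $\calcg$: the $0$-premiss rules, the elimination rules, and $\amal$. Throughout I track the polarity of nodes, so that the classification of a node as feeble or non-feeble, and the placement of the $\non$-prefix, always matches the connective rule being applied.

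For the \emph{if} direction (every union of grounding bars is provable) I would first treat a single grounding bar $G$ for $\alpha$. The key observation is that, for a \emph{fixed} selection tree $S$, the shallowest admissible bar---cutting each branch as close to the root as the non-triviality and only-child conditions permit---is, after prefixing negative nodes with $\non$, exactly the conclusion of one of the $0$-premiss rules; this is verified connective by connective (for instance $\{\non\beta\}<\non(\beta\et\gamma)$ at a pruned negative $\et$-node, and $\{\beta\}<\non\non\beta$ for double negation). I would then show that each elimination rule realises the operation of pushing a bar one step downwards: splitting a non-feeble node into both children (conjunction-, negated-disjunction- and double-negation-elimination) or descending a feeble node into its surviving child (disjunction- and negated-conjunction-elimination), the polarity bookkeeping guaranteeing that the $\non$-prefix appears exactly where a node becomes negative. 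Since every bar of $S$ lies at or below the shallowest one and the internal nodes of $S$ are covered by these rules, iterating elimination steps from the shallowest bar reaches every bar of $S$, hence every grounding bar. Finally $\amal$, together with $\set c$ to keep left-hand sides sets, turns derivations of individual bars into a derivation of their union.

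For the \emph{only if} direction I would argue by induction on the height of a $\calcg$-derivation, maintaining the invariant that the left-hand side of every derived claim $\Gamma<\alpha$ is a union of grounding bars for $\alpha$. The base case is immediate, since each $0$-premiss rule yields a single grounding bar. The elimination rules reuse the push-down correspondence in reverse: replacing a compound element of $\Gamma$ by the elements below it pushes the relevant bars one step down, and since only those bars containing the displaced element are altered the union transforms exactly as the rule prescribes. The introduction rules are handled by combination of bars: a grounding bar for $\alpha\et\beta$ is the union of a bar of the $\alpha$-subtree and a bar of the $\beta$-subtree, so if $\Gamma_1=\bigcup_i G^\alpha_i$ and $\Gamma_2=\bigcup_j G^\beta_j$ then the bars $G^\alpha_i\cup G^\beta_j$ witness $\Gamma_1\cup\Gamma_2$ as a union of grounding bars for $\alpha\et\beta$ via $\bigcup_{i,j}(G^\alpha_i\cup G^\beta_j)=\Gamma_1\cup\Gamma_2$ (using that the grounds occurring in a derivation are non-empty). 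The single-premiss disjunction and negation introductions correspond to the simpler fact that the grounding bars of a subformula are already grounding bars of the larger formula at the appropriate feeble or $\non$-node. The structural rules are trivial: $\set c$ only re-presents the left-hand side as a set, and $\amal$ is literally union.

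The main obstacle I expect is this completeness direction, and within it the interaction between the destructuring performed by the elimination rules and the set-union structure carried by $\amal$; the fiddly part is the bookkeeping lemma that modifying only the bars containing a displaced element transforms the union precisely, together with the companion fact that a product of unions of bars is again a union of bars, which underlies every introduction step. Getting the polarity tracking right is the most error-prone aspect, especially around the boundary conditions (non-triviality and the only-child exclusion), which must be shown to correspond exactly to the strictness of grounding, i.e.\ to the underivability of a self-grounding $\{\alpha\}<\alpha$.
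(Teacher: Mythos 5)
Your proposal is correct and takes essentially the same route as the paper: soundness by starting from the $0$-premiss rule whose ground consists of the children of the root of the selection tree (your ``shallowest bar'') and descending to the target bar via elimination rules, with $\amal$ assembling unions of bars, and completeness by induction on the derivation with a case analysis on the last rule applied. If anything, your bookkeeping is more careful than the paper's own: the product construction $\bigcup_{i,j}(G^\alpha_i \cup G^\beta_j)$ for the two-premiss introduction rules, and the observation that eliminations alter only the bars containing the displaced element, handle the fact that the inductive hypothesis delivers \emph{unions} of bars, a point the paper glosses over by treating each premiss ground as if it were a single bar.
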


Intuitively, constructing the selection tree corresponds to choosing one disjunct for each disjunction and one conjunct for each formula $\non(\beta\et\gamma)$ that might be encountered in a derivation in $\calcg$. In presence of the $\ast$ rules for disjunction, it is possible to choose both disjuncts and both conjuncts. Note that the definition of grounding bar guarantees that a suitable number of negations are prefixed to each subformula of $\alpha$ that occurs in the ground. The union of grounding bars should be considered since the $\amal$ rule is in the calculus. If the $\amal$ rule is not in the calculus, the following, simpler version of the previous theorem holds.
\begin{theorem}
A grounding claim $\{\alpha_1, \dots , \alpha_n\}<\alpha $ is provable in $\calcg$ if, and only if, the set of formulae $\{\alpha_1, \dots , \alpha_n\}$ is a grounding bar for $\alpha$. 
\end{theorem}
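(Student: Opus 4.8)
The plan is to prove both directions by induction, matching the combinatorics of grounding bars against the rules of $\calcg$. The bridge is the following reading of the calculus: the $0$-premiss rules produce the ``minimal'' grounding bars that sit exactly at the immediate-subformula level of the root; the elimination rules push an already-present component of the ground one level deeper into the syntactic tree; the introduction rules conversely assemble a ground of a compound formula from grounds of its immediate subformulas; and the structural rule $\set c$ merely presents the resulting list as a set. Throughout, the two phenomena governing grounding bars must be tracked: a \emph{feeble} node (positive $\vel$ or negative $\et$) is exactly a node at which a rule offers a choice of one child (the selection), and the parity of a node (below an even or odd number of $\non$-nodes) is exactly what the negation-handling rules record by prefixing $\non$.

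First I would prove the ``only if'' direction (provable implies grounding bar) by induction on the height of the derivation, checking each rule. The $0$-premiss rules are verified directly: each corresponds to the full syntactic tree truncated at the immediate subformulas, with the bar taken right below the root and negations inserted according to parity (e.g.\ $\{\non\alpha,\non\beta\}<\non(\alpha\vel\beta)$ is the bar at the children of the non-feeble negative $\vel$-node). For the introduction rules one shows that a conjunction-introduction sends grounding bars of $\beta$ and of $\gamma$ to their union, which is a grounding bar of $\beta\et\beta$ because the positive $\et$-root is not feeble and both subtrees survive in the selection tree; a disjunction-introduction realises a selection at the feeble $\vel$-root by pruning the unused disjunct, the excluded ``only child of the root'' being precisely the kept disjunct. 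For the elimination rules one shows that replacing a ground-component by the subcomponents prescribed by its main connective moves the corresponding bar node one level down, with the feeble cases ($\vel$ on the left, $\non(\cdot\et\cdot)$) matching the selection choice and the double-negation case skipping two $\non$-nodes while resetting parity. The decisive point here is that, in the absence of $\amal$, none of these operations ever forces a union of two independent bars: each rule transforms a single grounding bar into a single grounding bar, which is precisely why the $\amal$-free calculus characterises single grounding bars whereas the full calculus (the preceding theorem) characterises their unions.

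For the ``if'' direction (grounding bar implies provable) I would induct on the syntactic tree of $\alpha$, or equivalently on the depth of the bar below the root, and build the derivation explicitly. At the root one applies the $0$-premiss rule matching the main connective and its parity to obtain the immediate-subformula ground; whenever the bar descends strictly below an immediate subformula $\psi$ one invokes the induction hypothesis on the corresponding selection subtree to obtain a derivation of a ground of $\psi$ and attaches it either by the matching introduction rule or, dually, by the matching elimination rule that pushes the component down to the chosen bar nodes; the selection made at each feeble node dictates which disjunct or conjunct rule is used, and $\set c$ converts the accumulated list into the required set. The ``only child of the root'' proviso in the definition of grounding bar is exactly what guarantees that a leading negation is never unwound into a reflexive claim $\{\alpha\}<\alpha$, so the construction never stalls at a single $\non$-node.

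The part I expect to be the main obstacle is the negation bookkeeping and its interaction with the feeble/selection machinery. Because $\non\alpha$ is not groundable in its immediate subformula $\alpha$ alone, every rule treats a $\non$ only in combination with the connective beneath it, and one must check that ``prefix $\non$ to the negative nodes of the bar'' reproduces exactly the formulas these combined rules generate --- in particular that parity is correctly inverted across each $\non$-node and reset by double negation, and that negative $\et$-nodes behave as feeble while negative $\vel$-nodes do not. A secondary subtlety, in the ``if'' direction, is the interface between a subtree whose bar stops at its root (contributing the subformula directly, via a $0$-premiss or elimination step) and one whose bar descends strictly below it (requiring a genuine sub-derivation glued by an introduction rule); making these two cases fit under a single induction, mediated by $\set c$, is where the argument needs the most care.
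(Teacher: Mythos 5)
Your ``provable $\Rightarrow$ grounding bar'' direction is essentially the paper's own argument: induction on the derivation, with the $0$-premiss rules yielding bars at the immediate-subformula level, eliminations replacing a bar node by its children in a (possibly re-selected) selection tree, introductions grafting the sub-bar's selection tree under a new root, and $\set c$ trivial; your observation that every $\amal$-free rule sends a \emph{single} grounding bar to a single grounding bar is exactly what separates this theorem from the union version. Where you genuinely diverge is the converse direction. The paper does not induct on the formula at all: it fixes the selection tree $S$ underlying the bar $G$, applies the matching $0$-premiss rule at the root (so the ground is the set of children of the root of $S$, $\non$-prefixed according to parity), and then repeatedly applies elimination rules, each time replacing one non-bar element of the current ground by its children in $S$, never touching elements of $G$, until the ground equals $G$. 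This top-down, elimination-only construction makes your ``interface'' problem disappear: a component of the bar that sits at an immediate subformula simply never has a rule applied to it. Your introduction-based recursion, by contrast, must confront the fact that no derivation of $\{\beta\}<\beta$ exists (grounding bars are non-trivial), which forces the hybrid case split you flag at the end; your plan still works once that split is made, but it ends up needing both mechanisms (introductions for strictly descending components, $0$-premiss plus eliminations for components stopping at the top) where the paper needs only one. What your route buys is modularity---the two directions become mirror-image structural inductions---at the price of precisely the bookkeeping you identify as the hard part.

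One further caution: your use of the ``only child of the root of $S$'' proviso in the $\vel$-introduction case (excluding the kept disjunct) clashes with your own base case, where $\{\alpha\}$ must count as a grounding bar for $\alpha\vel\beta$ because the $0$-premiss rule derives $\{\alpha\}<\alpha\vel\beta$. Read literally against the pruned tree $S$, the proviso would exclude that bar and falsify the theorem; it is really meant to block reflexive bars under a unary $\non$-root (e.g.\ a bar giving $\{\non\delta\}<\non\delta$), so it should be applied relative to the unpruned syntactic tree. The paper's own proof silently adopts this reading (its construction starts from the children of the root \emph{of $S$}), so this is an ambiguity you inherit rather than introduce, but a complete write-up of your argument would need to fix one reading and use it consistently in both the base case and the introduction case.
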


Note, moreover, that this characterisation implies that the grounding relation defined by the considered calculi is decidable. Indeed, in order to list all grounds of a formula $\alpha$ it is enough to construct its syntactic tree $T$; construct the set of $S$ all selection trees of $T$, which is of finite cardinality; construct the set $B$ of all grounding bars of elements of $S$, which is finite as well since there exists only a finite number of bars for each finite tree. Since each step produces a finite set of objects, it is always possible to execute all these construction steps in finite time and obtain all grounds of $\alpha$ for any formula $\alpha$.

\subsection{Soundness an completeness of the characterisaiton}

We first prove that, given any
union $\{\alpha_1, \dots , \alpha_n\}$ of grounding bars $G_1,\dots, G_n$ for $\alpha$, we can construct a proof in $\calcg$ of  $\{\alpha_1, \dots , \alpha_n\}<\alpha $.

\begin{theorem}\label{thm:bar-deriv}
Given any
union $\{\alpha_1, \dots , \alpha_n\}$ of grounding bars $G_1,\dots, G_m$ for $\alpha$, the formula $\{\alpha_1, \dots , \alpha_n\}<\alpha $ is provable in $\calcg$.
\end{theorem}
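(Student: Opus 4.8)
The plan is to prove Theorem~\ref{thm:bar-deriv} by showing how to derive $\{\alpha_1,\dots,\alpha_n\}<\alpha$ whenever the set on the left is a union of grounding bars, proceeding in two stages. First I would handle the case of a \emph{single} grounding bar $G$ for $\alpha$, establishing that $G<\alpha$ is provable in $\calcg$; then I would use the $\amal$ rule to merge the derivations of $G_1<\alpha, \dots, G_m<\alpha$ into a derivation of their union $<\alpha$. The second stage is essentially immediate: $\amal$ takes two grounds of the same formula $\alpha$ and yields their combined list as a ground of $\alpha$, and iterating it $m-1$ times amalgamates all the $G_i$; the $\set c$ rule then cleans up any repetitions so that the conclusion is exactly the set $\{\alpha_1,\dots,\alpha_n\}$. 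So the real content lies in the single-bar case.

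For a single grounding bar $G$, the natural approach is induction on the syntactic tree $T$ of $\alpha$, tracking how $G$ is constructed from a selection tree $S$ and a non-trivial bar $B$ of $S$. I would set up an induction that builds the derivation from the root of $\alpha$ downward, at each step peeling off the main connective of the current formula and invoking the corresponding introduction rule of Figure~\ref{fig:intro-rules-gr}. Concretely, if $\alpha = \beta\et\gamma$, the bar $B$ splits into a part lying in the $\beta$-subtree and a part lying in the $\gamma$-subtree, each of which is (after the relevant prefixing of negations) a grounding bar for $\beta$ and for $\gamma$ respectively; the induction hypothesis gives derivations of these, and the conjunction introduction rule combines them. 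If $\alpha = \beta\vel\gamma$ and its root is a feeble node at which the selection picked one child, say $\gamma$, then the whole bar lives in the $\beta$-subtree, $B$ is a grounding bar for $\beta$, and one disjunction introduction rule applies; if both children were kept (under the $\ast$ rules), the $\ast$-marked disjunction rule applies with both subderivations. The negation cases are handled by reading $\non(\beta\et\gamma)$, $\non(\beta\vel\gamma)$, and $\non\non\beta$ through the corresponding introduction rules, using the fact that passing under a $\non$-node flips positive/negative status and hence flips which nodes are feeble and whether a subformula in the bar is prefixed by $\non$. The base case is when the bar $B$ consists of a single leaf coinciding with an immediate subformula position, matched by the appropriate $0$-premiss rule of Figure~\ref{fig:0-rules-gr}.

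The subtle point I would isolate is the precise bookkeeping of the recursion: I must verify that restricting a grounding bar of $\alpha$ to an immediate subtree yields, after the correct negation adjustment, a genuine grounding bar of the corresponding immediate subformula. This requires checking that the selection tree, the bar, and the non-triviality conditions all restrict coherently — in particular that the clause ``$B$ does not contain the only child of the root'' interacts properly with the recursive descent, since after one introduction step the ``only child'' constraint must be reconciled with what the induction hypothesis demands. I expect this restriction lemma to be the main obstacle, and I would state it as an auxiliary claim and prove it by examining each shape of the root connective together with its positive/negative status.

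A clean way to organise the whole argument is therefore to phrase the induction not on $\alpha$ in isolation but on the pair consisting of the syntactic tree and a chosen bar, generalising the statement so that the induction hypothesis is directly applicable to the immediate subformulae. I would make the induction hypothesis assert: for every subformula occurrence $\delta$ of $\alpha$ and every grounding bar $G'$ for $\delta$ obtained by the restriction described above, the claim $G'<\delta$ is provable in $\calcg$. With this strengthened statement the inductive step becomes a routine case analysis over the main connective and the parity of the node, each case matching exactly one introduction rule, and the theorem for a single bar follows by taking $\delta=\alpha$. Combining this with the $\amal$-and-$\set c$ argument of the first stage completes the proof.
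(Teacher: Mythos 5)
Your two-stage structure (derive each bar separately, then merge with $\amal$ and tidy up with $\set c$) matches the paper's proof, but your treatment of the single-bar case contains a genuine gap: the ``restriction lemma'' on which your whole induction rests is false. Take $\alpha = p\et(q\et r)$ and the grounding bar $G=\{p,q,r\}$. This is a legitimate grounding bar: the syntactic tree has no feeble nodes, so the selection tree is the full tree, $\{p,q,r\}$ is a bar of it, it does not contain the root, and the only-child clause is vacuous because the root has two children. Now restrict $G$ to the immediate subtree rooted at $p$: you get $\{p\}$, which is \emph{not} a grounding bar for $p$ --- it is the trivial bar consisting of the root of that subtree, and correspondingly $\{p\}<p$ is not derivable in $\calcg$ (no rule of the calculus has an atom, or indeed any formula grounded by itself, in its conclusion). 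So in your conjunction case the two restricted parts need not be grounding bars of the respective conjuncts, the induction hypothesis is not applicable, and the conjunction introduction rule cannot be the last step of any derivation of $\{p,q,r\}<p\et(q\et r)$, since its left premiss would have to be a derivation of $\{p\}<p$. This failure cannot be repaired inside your introduction-only scheme: using only $0$-premiss rules, introduction rules, $\set c$ and $\amal$, the sole derivable ground of $p\et(q\et r)$ is $\{p,q\et r\}$, so $\{p,q,r\}<p\et(q\et r)$ is simply out of reach. The same obstruction recurs whenever a grounding bar mixes a depth-one node with deeper nodes under a two-child non-feeble node, e.g.\ $\{\non p,\non q,\non r\}<\non(p\vel(q\vel r))$.

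The paper's proof avoids this by working top-down with the \emph{elimination} rules, which your proposal never uses: it first applies the $0$-premiss rule whose conclusion grounds $\alpha$ in the children of the root of the selection tree, and then repeatedly applies elimination rules, replacing any ground element not belonging to $G$ by its children in the selection tree (prefixed by $\non$ when required), never touching elements that are already in $G$, until the ground is exactly $G$. Elements of $G$ sitting at depth one are then simply left alone, so the case that breaks your recursion never arises. Any repair of your bottom-up argument would have to handle the mixed case by introducing the conjunction via the $0$-premiss rule and then pushing the deeper part of the bar down with eliminations --- which is precisely the paper's argument.
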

\begin{proof}

Let us consider any set of formulae $\{\alpha_1, \dots , \alpha_n\}= G_1\cup\dots\cup G_m$ such that $G_1,\dots, G_m$ are grounding bars for $\alpha$. If, for any $G\in\{G_1,\dots, G_m\}$ we construct a proof in $\calcg$ of $\{\gamma_1, \dots ,\gamma_k \}<\alpha$ for $\{\gamma_1, \dots ,\gamma_k \}=G$, then we can construct a proof of $\{\alpha_1, \dots , \alpha_n\}<\alpha $ by applying $m$ times the rule $\amal$ to the conclusions of these proofs. Let us then show how to construct the proof of $\{\gamma_1, \dots ,\gamma_k \}<\alpha$ for $\{\gamma_1, \dots ,\gamma_k \}=G$. 

In the following, we will assume that the rules $\ast$ for disjunction are not in the calculus. In case they are, it is enough to use them whenever the selection tree of $\alpha$ on which $G$ is based contains both children of a feeble node.

Let us call $S$ the selection tree for $\alpha$ on which $G$ is based.
 
We start the proof by applying the $0$-premiss rule with conclusion $\{ \dots \}<\alpha$ such that the ground $\{ \dots \}$ contains the children of the root of $S$, possibly prefixed by $\non$ if the rule requires it. Now, we pick one element $\gamma$ of the ground occurring at the conclusion of the obtained derivation and apply one elimination rule to it such that the ground at the conclusion of the rule application contains---instead of $\gamma$---the children of the node for $\gamma$ in $S$, possibly prefixed by $\non$ if the rule requires it. We never apply rules to elements of $G$ that occur in the ground at the conclusion of the derivation and we stop when this ground only contains elements of $G$. 

To see that this always produces the desired proof, it is enough to note that the definition of selection tree simply enables us to remove subtrees of a syntactic tree which are rooted at nodes such that, when we can apply an elimination rule to the corresponding formula, we need to choose one of two possible subformulae to be kept in the ground. The selection tree makes that choice. This choice cannot be simply made for $\et$-nodes and for nodes of the form $\non(\dots \vel \dots )$ but must be made for positive $\et$-nodes and negative $\vel$-nodes since, when the formula corresponding to the node will appear in a grounding derivation, it will appear prefixed with $\non$ in case the corresponding node is negative. Note, moreover, that this fact concerning negations is also considered in the definition of grounding bar.
\end{proof}

%\begin{corollary}Given a union $\{\gamma_1 , \dots , \gamma _n \}$ of minimal logical decompositions of any formula  $\varphi$, we can construct a derivation in $\calcg$  of $\{\gamma_1 , \dots , \gamma _n \}<\varphi$.
%\end{corollary}
%\begin{proof}It is enough to construct the derivations in $\calcg$ of the individual minimal logical decompositions of  $\varphi$ and then apply $\amal$ once for each union required to obtain $\{\gamma_1 , \dots , \gamma _n \}$.      
%\end{proof}

We now prove that, given any derivation in $\calcg$  of $\{\alpha_1, \dots , \alpha_n\}<\alpha $, we can show that $\{\alpha_1, \dots , \alpha_n \}$ is the union of some grounding bars for $\alpha$.
\begin{theorem}\label{thm:deriv-bar}
If $\{\alpha_1, \dots , \alpha_n\}<\alpha $ is provable in $\calcg$, then there exist grounding bars $G_1\cup\dots\cup G_m$ for $\alpha$ such that $\{\alpha_1, \dots , \alpha_n \}=G_1\cup\dots\cup G_m$.
\end{theorem}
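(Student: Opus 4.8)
The plan is to prove the statement by induction on the height of the $\calcg$-derivation of $\{\alpha_1,\dots,\alpha_n\}<\alpha$, maintaining at every node of the derivation the invariant that the ground standing to the left of $<$ is a union of grounding bars for the formula standing to its right. This is exactly the converse of the construction carried out in Theorem \ref{thm:bar-deriv}, so the guiding idea throughout is to read each inference rule backwards as an operation on grounding bars, and to show that this operation preserves the invariant.

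For the base case I would check, one $0$-premiss rule at a time, that its conclusion $\{\dots\}<\delta$ has a ground that is a single grounding bar for $\delta$, exhibiting in each case the witnessing selection tree and the witnessing bar. For $\{\alpha,\beta\}<\alpha\et\beta$ this is the full tree with the bar consisting of the two children of the (positive, hence non-feeble) root; for $\{\non\alpha\}<\non(\alpha\et\beta)$ it is the selection tree obtained by deleting the $\beta$-subtree below the feeble negative $\et$-node, with the bar sitting at $\alpha$ and prefixed by $\non$ precisely because that node is negative; and analogously for the remaining rules, the double-negation rule using that the node below the two leading $\non$'s is positive. This is the place where the exact reading of the ``only child of the root'' proviso must be pinned down, in particular for a bare disjunction at the root, since that is the one configuration in which the feeble node and the root of the selection tree coincide.

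In the inductive step I would case on the last rule. The two structural rules are immediate: $\set c$ only reorders and removes repetitions, and grounding bars are sets; $\amal$ forms the union of two grounds, each of which is already a union of grounding bars by the induction hypothesis, so their union is as well --- this is exactly why the statement is phrased with a union. For the introduction rules I relate grounding bars of the compound to grounding bars of its immediate subformulas. For $\et$, and under a leading $\non$ for the starred negated-conjunction rule and the negated-disjunction rule, a grounding bar of the compound is the union of one bar from each side, so the identity $\bigcup_i G_i\cup\bigcup_j H_j=\bigcup_{i,j}(G_i\cup H_j)$ converts the two hypotheses into a single union over the compound. For the $\vel$-introductions, the double-negation introduction, and the single-side negated-conjunction rule, every grounding bar of the subformula is already a grounding bar of the compound, because the added connective is either not feeble or is a feeble node whose non-selected side has been deleted; the polarities match since a conjunction, a disjunction, and a pair of negations all preserve the parity of negations above each retained node.

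For the elimination rules the formula on the right is unchanged while one element of the ground is replaced by its components, so I would isolate the corresponding surgery on a single grounding bar for $\delta$ and then apply it to each bar in the union supplied by the induction hypothesis. Pushing a bar past a positive $\et$-node (an unprefixed element $\beta\et\gamma$) or a negative $\vel$-node (an element $\non(\beta\vel\gamma)$), replacing the node by both children, leaves the selection tree untouched; the double-negation elimination pushes the frontier past the two leading $\non$-nodes of its element without altering any selection; while moving past a feeble node --- a positive $\vel$ or a negative $\et$, matched by the $\vel$-elimination and negated-conjunction-elimination rules --- corresponds to switching to the selection tree in which the discarded side is deleted (or, under the $\ast$ rules, to retaining both children), after which the kept child lies on the frontier. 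In every one of these cases I must verify that the new node-set is still a bar of a legitimate selection tree, that it remains non-trivial and avoids the only child of the root, and that exactly the negations demanded by the node polarities are prefixed. I expect this uniform verification for the elimination rules, and in particular the bookkeeping that the change of selection tree stays compatible with the non-triviality and only-child constraints, to be the main obstacle; the introduction and structural rules are comparatively routine once the subformula-to-compound correspondence is in place.
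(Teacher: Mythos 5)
Your proposal is correct and takes essentially the same approach as the paper's proof: induction on the derivation with a case split on the last rule, exhibiting witnessing selection trees and bars for the $0$-premiss and introduction rules and performing the corresponding surgery on bars for the elimination rules. If anything, you are more careful than the paper on two points it glosses over: you track the inductive hypothesis as a \emph{union} of grounding bars (via the identity $\bigcup_i G_i\cup\bigcup_j H_j=\bigcup_{i,j}(G_i\cup H_j)$ for the two-premiss rules and per-bar surgery for the eliminations, where the paper silently treats each premiss as a single bar), and you flag the reading of the ``only child of the root'' proviso for a bare disjunction at the root --- precisely the $0$-premiss case the paper's exemplar base cases avoid.
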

\begin{proof}
The proof in by induction on the rules applied in the derivation of $\{\alpha_1, \dots , \alpha_n\}<\alpha $. 

In the following, we will assume that the rules $\ast$ for disjunction are not in the calculus. In case they are, it is enough to use the definition of selection tree that admits the case in which no child of a feeble node is selected.

If the only rule in the derivation is a $0$-premiss rule, it is easy to see that the ground is a grounding bar for $\alpha$. We consider three exemplar cases.

A rule application of the form
\[\infer{  \ergo\{\beta,\gamma\}<\beta\et\gamma}{}
\]corresponds to the grounding bar $\{\beta,\gamma\}$ obtained from the bar $\{\beta,\gamma\}$ of any selection tree for $\beta\et\gamma$. The node $\beta\et\gamma$ in the syntactic tree of $\beta\et\gamma$ is, indeed, not feeble, and hence we do not remove any of its children.

A rule application of the form
\[\infer{  \ergo\{\non\beta\}<\non(\beta\et\gamma)}{}
\]corresponds to a grounding bar obtained from the bar $\{\beta\}$ of the selection tree for $\non(\beta\et\gamma)$ in which we removed the subtree rooted at the node $\gamma$ of the syntactic tree of $\beta$. The node $\beta\et\gamma$ in the syntactic tree of $\non(\beta\et\gamma)$ is, indeed, feeble because it is a negative $\et$-node. 
Since, moreover, also the node $\beta$ is negative, any grounding bar for $\non(\beta\et\gamma)$ based on a bar containing $\beta$ contains $\non\beta$.

A rule application of the form
\[ 
\infer{  \ergo\{\non\beta,\non\gamma\}<\non(\beta\vel\gamma)}{}
\]corresponds to the grounding bar $\{\non\beta,\non\gamma\}$ obtained from the bar $\{\beta,\gamma\}$ of any selection tree for $\non(\beta\vel\gamma)$. The node $\beta\vel\gamma$ in the syntactic tree of $\non(\beta\vel\gamma)$ is, indeed, not feeble because it is a negative $\vel$-node. Hence we do not remove any of its children. Since the nodes $\beta$ and $\gamma$ are negative, any grounding bar for $\non(\beta\vel\gamma)$ contain $\non\beta,\non\gamma$.

%\[
%\infer{  \ergo \{\alpha,\beta\}<\alpha\et\beta}{}
%\qquad 
%\infer{  \ergo\{\alpha\}<\alpha\vel\beta}{}
%\qquad 
%\infer{  \ergo\{\beta\}<\alpha\vel\beta}{}
%\]\[ 
%\infer{  \ergo\{\non\alpha\}<\non(\alpha\et\beta)}{}
%\qquad 
%\infer{  \ergo\{\non\beta\}<\non(\alpha\et\beta)}{}
%\]\[ 
%\infer{  \ergo\{\non\alpha,\non\beta\}<\non(\alpha\vel\beta)}{}
%\qquad 
%\infer{  \ergo\{\alpha\}<\non\non\alpha}{}
%\]

Let us now suppose that all derivations containing less than $k$ rule applications comply with the statement of the theorem, we prove that also any derivation containing $k>1$ rule applications complies with the statement.

Let us reason on the last rule applied in the derivation.

Suppose the last rule applied in the derivation is an elimination rule:
\[\infer{  \ergo\{\beta_1,\dots,\gamma_1, (\gamma_2,)\dots, \beta_z\}<\delta}{  \ergo\{\beta_1,\dots,\gamma,\dots, \beta_z\}<\delta}
\]where $\gamma_2$ might not occur. By inductive hypothesis, $\{\beta_1,\dots,\gamma,\dots, \beta_z\}$ corresponds to a grounding bar for $\delta$. 
The 
%If the elimination rule  neither eliminates $\vel$ nor $\non(\dots \et\dots )$, the 
ground $\{\beta_1,\dots,\gamma_1, (\gamma_2,)\dots, \beta_z\}$ corresponds to the grounding bar $G$ based on the selection tree $S$ for $\delta $ where 
\begin{itemize}
\item if the node of the syntactic tree of $\delta$ corresponding to $\gamma$ is feeble, $S$ is obtained by removing the subtree rooted at the child of this node that corresponds to the subformula of $\gamma$ that has been eliminated in order to obtain $\{\beta_1,\dots,\gamma_1, (\gamma_2,)\dots, \beta_z\}$, and 
\item $G$ is obtained from $S$ by replacing the node corresponding to $\gamma$ with all its children.
\end{itemize}
If $\gamma$ is prefixed by  $\non$ while the corresponding node is not, then also $\gamma_1, (\gamma_2) $ will be prefixed by $\non$.

Suppose the last rule applied in the derivation is a $1$-premiss introduction rule:
\[\infer{\ergo\{\beta_1,\dots,\beta_z\}<\gamma}{\ergo\{\beta_1,\dots,\beta_z\}<\gamma_1}
\]By inductive hypothesis, $\{\beta_1,\dots,\beta_z\}$ corresponds to a grounding bar $G$ for $\gamma_1$. We define a selection tree $S$ for $\gamma$ as follows:
we use $\gamma$ as root, append one or two nodes corresponding to $\gamma_1$ (two if $\gamma$ is negated) below the root, and we append the selection tree on which $G$ is based below the nodes corresponding to $\gamma_1$. The nodes corresponding to $\gamma_1$ are obtained, in case the introduction application under consideration is one for negation, by removing the outermost $\non$ from $\gamma$  and then appending to this node the formula obtained 
by removing the outermost $\non$ from  $\gamma_1$; in case the introduction application under consideration is not for negation, by simply appending $\gamma_1$ to $\gamma$. It is easy to see that $G$ is a grounding bar for $S$. Indeed, the positive nodes in the syntactic tree of $\gamma_1$ are positive also in the syntactic tree of $\gamma$ and the negative nodes in the former are negative also in the latter.

Suppose the last rule applied in the derivation is a $2$-premiss introduction rule:
\[\infer{\ergo\{\beta_1,\dots,\beta_z,\gamma_1,\dots,\gamma_x\}<\delta}{\ergo\{\beta_1,\dots,\beta_z\}<\delta_1 & \ergo\{\gamma_1,\dots,\gamma_x\}<\delta_2}
\]By inductive hypothesis, the grounds occurring in the premisses correspond to grounding bars $G_1$ and $G_2$ for $ \delta_1$ and $\delta_2$ respectively. We define a selection tree $S$ for $\delta$ as follows:
we use $\delta$ as root, append some nodes  corresponding to $ \delta_1,\delta_2$ (three if $\delta$ is negated) below the root, and we append the selection trees on which $G_1$ and $G_2$ are based below the nodes corresponding to $\delta_1$ and $\delta_2$ respectively. The nodes corresponding to $\delta_1,\delta_2$ are obtained, in case the introduction application under consideration is one for negation, by removing the outermost $\non$ from $\delta$  and then appending to this node the formulae obtained by removing the outermost $\non$ from $\delta_1$ and $\delta_2$; in case the introduction application under consideration is not for negation, by simply appending $\delta_1$ and $\delta_2$ as children of $\delta$. It is easy to see that $G_1\cup G_2$ is a grounding bar for $S$. Indeed, the positive nodes in the syntactic trees of $\delta_1$ and $\delta_2$ are positive also in the syntactic tree of $\delta$ and the negative nodes in the former trees are negative also in the latter tree.

If the last rule applied in the derivation is $\set c$, there is nothing to prove since the set of formulae corresponding to the ground does not change.
%
%\[ 
%\vcenter{\infer[\set c]{  \ergo\{\beta_1,\dots,\beta_m\}<\alpha}{\ergo\{\alpha_1,\dots,\alpha_n\}<\alpha}}
%\]

If the last rule applied in the derivation is $\amal$, it is enough to take the union of the two minimal logical decomposition corresponding to the premisses. These exist by inductive hypothesis.

 \[ 
\infer[\amal]{  \ergo\{\alpha_1,\dots, \alpha_n,\beta_1,\dots, \beta_m\}<\alpha}{ \ergo\{\alpha_1,\dots, \alpha_n\}<\alpha& \ergo\{\beta_1,\dots, \beta_m\}<\alpha}
\]
%\[ 
%\infer[\cut]{ \ergo
%\{\beta_1,\dots,\beta_{i-1},\alpha_1,\dots,\alpha_n,\beta_{i+1},\dots, \beta_m\}<\beta}{\ergo\{\alpha_1,\dots,\alpha_n\}<\alpha& \ergo\{\beta_1,\dots,\beta_{i-1},\alpha,\beta_{i+1},\dots, \beta_m\}<\beta}
%\]

\end{proof}

\bibliographystyle{apalike}
\bibliography{bib-raga.bib}
\end{document}